\documentclass[14pt]{amsart}

\usepackage{todonotes}

\usepackage{amsfonts, amssymb, amscd}

\usepackage[symbol]{footmisc}
\usepackage{amsmath}
\usepackage{bm}
\usepackage{verbatim}
\usepackage{mathrsfs}
\usepackage{lineno}
\usepackage{graphicx}
\usepackage{tikz-cd}
\usepackage{subcaption}
\usepackage{listings}
\usepackage{subfiles}
\usepackage[toc,page]{appendix}
\usepackage{mathtools}
\usepackage{comment}
\usepackage{enumerate}
\usepackage{enumitem}
\usepackage[all]{xy}

\usepackage{graphicx}
\graphicspath{{images/}}

\usepackage{appendix}
\usepackage{hyperref}
\hypersetup{
	colorlinks=true,
	citecolor=red,
	linkcolor=blue,
	filecolor=magenta,      
	urlcolor=red,
}
\lstset{
	basicstyle=\ttfamily,
	columns=fullflexible,
	frame=single,
	breaklines=true,
	postbreak=\mbox{\textcolor{red}{$\hookrightarrow$}\space},
}

\newcommand{\Cd}{\mathcal{D}}
\newcommand{\Ci}{\mathcal{I}}

\newcommand{\Co}{\mathcal{O}}
\newcommand{\Cr}{\mathcal{R}}

\newcommand{\Cx}{\mathcal{X}}

\newcommand{\Cb}{\mathcal{B}}
\newcommand{\Ce}{\mathcal{E}}
\newcommand{\Cw}{\mathcal{W}}

\newcommand{\Ct}{\mathcal{T}}

\newcommand{\Cc}{\mathcal{C}}
\newcommand{\Ch}{\mathcal{H}}

\newcommand{\Supp}{\mathrm{Supp}}
\newcommand{\vol}{\mathrm{vol}}
\newcommand{\hvol}{\widehat{\mathrm{vol}}}

\theoremstyle{plain} 
\newtheorem{thm}{Theorem}[section] 
\newtheorem{lemma}[thm]{Lemma}

\theoremstyle{definition} 
\newtheorem{defn}[thm]{Definition} 

\theoremstyle{remark} 

\begin{document}
	\title{Discreteness of volumes of divisors on Calabi--Yau type varieties}
	
	\author{Junpeng Jiao}
	\email{jiao$\_$jp@tsinghua.edu.cn}
	\address{Yau Mathematical Sciences Center, Tsinghua University, Beijing, China}
	
	\keywords{}

	\begin{abstract}
		We study the volumes of divisors in Calabi--Yau type varieties. We show that given a klt Calabi--Yau pair $(X,B)$ and an integral divisor $A$ on $X$, the volume of $A$ is in a discrete set depending only on the dimension and singularities of $(X,B)$. As an application, we prove a boundedness result of polarized log Calabi--Yau pairs which was conjectured by Birkar.
	\end{abstract}
	
	\maketitle
	\tableofcontents
	Throughout this paper, we work over $\mathbb{C}$. 
	\section{Introduction}
	The study of Calabi-–Yau pairs--projective varieties equipped with boundary divisors satisfying the Calabi–Yau condition $K_X+B\sim_{\mathbb{Q}} 0$--plays a central role in the minimal model program and birational geometry. A fundamental problem is understanding the distribution of volumes of divisors and the boundedness of such pairs under natural geometric constraints. In this paper, we establish two main results addressing these questions.
	
	Our first theorem demonstrates the discreteness of volumes of integral divisors on $\epsilon$-log canonical ($\epsilon$-lc) Calabi–Yau pairs.
	\begin{thm}\label{Main theorem: discreteness of volume}
		Fix $d\in \mathbb{N}$ and $\epsilon\in (0,1)$, then there exists a discrete set $\Cc$ depending only on $d,\epsilon$ such that for every $d$-dimensional $\epsilon$-lc Calabi--Yau pair $(X,B)$, if $A$ is an integral divisor on $X$, then 
		$$\vol(A)\in \Cc.$$
	\end{thm}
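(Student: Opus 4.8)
The plan is to reduce $\vol(A)$ to a top self-intersection on a well-chosen model and then to pin down the possible values of that intersection number using the boundedness of $\epsilon$-lc Fano varieties together with an induction on dimension via the canonical bundle formula. First I would dispose of the trivial case: if $A$ is not big then $\vol(A)=0$, which I place into $\Cc$, so assume $A$ is big. Since $K_X+B\sim_{\mathbb{Q}}0$, for a general effective $D\sim_{\mathbb{Q}}A$ and $0<t\ll 1$ the pair $(X,B+tD)$ is klt and $K_X+B+tD\sim_{\mathbb{Q}}tA$ is big. Every step of the $(K_X+B+tD)$-MMP is $D$-negative and, because $K_X+B\equiv 0$, is also $(K_X+B)$-trivial; hence the MMP is crepant for $(X,B)$ and, by BCHM, terminates with $(X',B')$ on which $A'$ (the strict transform of $A$) is nef and big. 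Comparing discrepancies on a common resolution shows $(X',B')$ is again a $d$-dimensional $\epsilon$-lc Calabi--Yau pair and $\vol(A)=(A')^d$. As $A'-(K_{X'}+B')\sim_{\mathbb{Q}}A'$ is nef and big over the klt pair $(X',B')$, the base-point-free theorem makes $A'$ semiample; contracting the curves on which it is trivial yields the birational ample model $\phi\colon X'\to Y$ with $A'=\phi^{*}H$, where $H$ is an ample integral divisor on the $\epsilon$-lc Calabi--Yau pair $(Y,B_Y)=(Y,\phi_{*}B')$ and $\vol(A)=H^{d}$.

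It now suffices to show that the numbers $H^{d}$ form a discrete set, and here I would distinguish two cases according to whether $-K_Y$ is big. If $-K_Y\sim_{\mathbb{Q}}B_Y$ is big, then $(Y,B_Y)$ is a klt Calabi--Yau pair with big boundary and hence $Y$ is of Fano type; being also $\epsilon$-lc and $d$-dimensional, it ranges in a bounded family by the boundedness of $\epsilon$-lc Fano varieties. A bounded family has a uniform bound $N=N(d,\epsilon)$ on the Cartier index of any $\mathbb{Q}$-Cartier Weil divisor, so $NH$ is Cartier and $H^{d}\in\tfrac{1}{N^{d}}\mathbb{Z}_{>0}$, which is discrete.

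In the remaining case $-K_Y$ is not big, and I would exploit the structure theory of klt Calabi--Yau pairs: up to a further crepant modification (which preserves $\vol(A)$) there is a Fano-type fibration $f\colon Y\to Z$ onto a base of dimension $<d$, whose general fibre $F$ satisfies $K_F+B_Y|_F\sim_{\mathbb{Q}}0$ with $-K_F$ ample, so $F$ is an $\epsilon$-lc Fano variety and is BAB-bounded. The canonical bundle formula $K_Y+B_Y\sim_{\mathbb{Q}}f^{*}(K_Z+B_Z+M_Z)$ then presents $(Z,B_Z+M_Z)$ as a generalised $\epsilon$-lc Calabi--Yau pair of smaller dimension carrying the polarisation descended from $H$. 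Writing $H^{d}$ as a sum of contributions indexed by the Okounkov/Newton--Okounkov decomposition of the fibration — the fibrewise self-intersections $(H|_F)^{\dim F}$, which are discrete by boundedness of the fibres, multiplied by volumes computed on the base — I would close the argument by the inductive hypothesis applied to the lower-dimensional generalised Calabi--Yau base.

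The main obstacle is precisely this fibration step. Running the induction forces one to prove the statement for generalised pairs, and the delicate point is controlling the moduli part $M_Z$, which is only a nef $b$-divisor: one must show that the data $(Z,B_Z+M_Z)$ together with the descended polarisation again vary in a discrete or bounded fashion, and that the decomposition of $H^{d}$ into fibre and base contributions preserves both the lower and upper control needed for discreteness (equivalently both the DCC and the ACC). Establishing that $M_Z$ and the descended divisor stay in a uniformly bounded setting — so that the integrality of $A$ survives the canonical bundle formula as a uniform bound on denominators despite the unbounded Cartier index of $\epsilon$-lc singularities — is where the real work lies; I would expect to need the boundedness of the moduli $b$-divisors of the fibration together with a careful use of the global ACC to fix the finitely many admissible coefficients on the base.
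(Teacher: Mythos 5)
Your opening reduction is sound and agrees with the paper: pass to a klt pair $(X,B+tD)$, run the MMP/take the ample model, and reduce to the discreteness of $H^d$ for an ample integral divisor $H$ on an $\epsilon$-lc Calabi--Yau pair $(Y,B_Y)$ (the paper does this in one stroke via the canonical model of $(X,B+\delta D)$). After that, however, there are genuine gaps. First, your dichotomy is incomplete: if $B_Y=0$, i.e.\ $K_Y\sim_{\mathbb{Q}}0$ (abelian varieties, strict Calabi--Yau varieties), then $-K_Y$ is not big \emph{and} no Fano-type fibration onto a lower-dimensional base exists, even after crepant modification, so neither of your cases applies. This case is not an afterthought; in the paper it is exactly where Birkar's boundedness of polarized varieties enters (\cite[Theorem 1.5]{Bir23}), and it requires first fixing an upper bound $\vol(A)\leq v$ (legitimate for discreteness, but you never fix $v$ in your non-big case). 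Second, when $B_Y\neq 0$ is not big, $K_Y$ is not pseudo-effective and a $K_Y$-MMP does yield a Mori fibre space --- but on a \emph{different} birational model, on which the polarization is no longer ample; controlling the return trip (isomorphisms in codimension one, \cite[Proposition 3.7]{BDCS20}, negativity) is a substantial part of the paper's argument that you elide. Third, your fibration case is a declaration rather than a proof, as you yourself concede: the paper resolves the ``integrality surviving the canonical bundle formula'' problem not by any Okounkov-type decomposition of $H^d$ (which, as an identity splitting $H^d$ into fibre self-intersections times base volumes, is not valid in general) but by bounding the pseudo-effective threshold $t$ away from $0$ (Theorem \ref{boundedness of pseudo-effective threshold}, a delicate degeneration/invariance-of-plurigenera argument occupying much of the paper), putting $t$ in a finite set, bounding multiplicities over codimension-one points of the base (\cite{Bir23b}) so that $A_Z=p(K_Z+C+R)$ is integral, bounding $\vol(A_Z)$ by a projection-formula estimate, and then applying induction together with boundedness of Fano-type fibrations (\cite[Theorem 1.2]{Bir22}).

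A further, more local error: your assertion that a bounded family of $d$-dimensional $\epsilon$-lc varieties automatically carries a uniform bound $N(d,\epsilon)$ on the Cartier index of every $\mathbb{Q}$-Cartier Weil divisor is not a formal consequence of boundedness, and the paper does not use it. Instead it proves the Cartier-index bound via the lc-volume criterion of \cite{HLQZ25} (Lemmas \ref{boundedness of Cartier index by lc volume} and \ref{boundedness of Cartier index by volume and lct}), which needs a uniform klt threshold for members of $|A|_{\mathbb{Q}}$, supplied by Lemma \ref{e-lc threshold} via \cite[Theorem 1.8]{Bir21} --- a route chosen precisely because, at that stage of the argument, boundedness is only known in codimension one, so no family-theoretic bound on Cartier indices is available. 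In your Fano-type case this step is likely repairable by the same mechanism (since $B_Y$ is big, an admissible graded linear series with volume bounded below exists once one has a uniform lc threshold), but as written it is an unjustified leap, and it is the same missing ingredient that would be needed to convert your final $H^d\in\frac{1}{N^d}\mathbb{Z}_{>0}$ conclusion into a theorem.
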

	This discreteness arises purely from the underlying geometry, independent of the coefficients of the boundary divisor $B$.
	
	Our second result achieves boundedness for polarized Calabi–Yau pairs without requiring the coefficients of $B$ to lie in a finite set. This result was conjectured by Birkar.
	\begin{thm}\label{Main theorem: boundedness of polarized Calabi--Yau}
		Fix $d\in \mathbb{N}$, $\epsilon\in (0,1)$, and $v>0$, then the set of projective varieties $X$ such that
		\begin{itemize}
			\item $\mathrm{dim}(X)=d$,
			\item $(X,B)$ is $\epsilon$-lc Calabi--Yau for some $\mathbb{Q}$-divisor $B$ on $X$, and
			\item there exists an ample integral divisor $A$ on $X$ such that $A^d\leq v$.
		\end{itemize}
		forms a bounded family.
	\end{thm}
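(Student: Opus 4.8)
The plan is to deduce boundedness from Theorem~\ref{Main theorem: discreteness of volume}. Since $A$ is ample and integral, $\vol(A)=A^d$, so Theorem~\ref{Main theorem: discreteness of volume} places $A^d$ in the discrete set $\Cc=\Cc(d,\epsilon)$. Combined with the hypothesis $A^d\le v$, discreteness forces $A^d\in\Cc\cap(0,v]$, which is \emph{finite}; hence there is $v_0=v_0(d,\epsilon,v)>0$ with $v_0\le A^d\le v$ for every pair in the family. Moreover $B\ge 0$ and the $\epsilon$-lc condition on $(X,B)$ force $X$ itself to be $\epsilon$-lc, hence klt. Thus we are reduced to bounding $\epsilon$-lc varieties $X$ with $K_X+B\equiv 0$ for some $B\ge 0$, carrying an ample $A$ whose volume $A^d$ lies in a fixed finite set bounded away from $0$.

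The strategy is first to establish effective birationality and then to upgrade it to boundedness. Concretely, I would produce $m=m(d,\epsilon,v_0)\in\mathbb{N}$, independent of the pair, such that $|mA|$ defines a birational map; this already confines $X$ to a birationally bounded family, since the image in $\mathbb{P}^{N_1}$ has degree at most $(mA)^d=m^dA^d\le m^dv$ and the bound $N_1\le(mA)^d+d-1$ for nondegenerate varieties controls the ambient dimension. To pass from birational boundedness to boundedness I would run the standard minimal-model-theoretic upgrade of Hacon--McKernan--Xu: using that $X$ is $\epsilon$-lc with $K_X+B\equiv 0$ and that $A$ is an ample divisor of bounded volume, one recovers $X$ together with a uniformly very ample Cartier multiple of $A$ inside a bounded family, whence a Hilbert scheme argument yields boundedness.

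For the effective birationality I would mimic the non-klt-centre technique of Birkar and Hacon--McKernan--Xu, but with the polarization $A$ playing the role of the (anti)canonical class and with $B$ kept inside the boundary, so that positivity comes from $A$ while $K_X+B\equiv 0$ provides the Calabi--Yau normalization. To separate two general points, the volume bound $\vol(mA)=m^dA^d\ge m^dv_0$ produces, for $m$ depending only on $d$ and $v_0$, a $\mathbb{Q}$-divisor $D\sim_{\mathbb{Q}}\lambda A$ with $\lambda=\lambda(d,v_0)$ bounded such that $(X,B+D)$ has an isolated non-klt centre at one of the points; the $\epsilon$-lc hypothesis is exactly what allows the centre, after tie-breaking, to be cut down to that single point rather than a positive-dimensional locus coming from the singularities of $X$. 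Since $mA-(K_X+B+D)\equiv(m-\lambda)A$ is nef and big, Nadel vanishing gives $H^1(X,\mathcal{O}_X(mA)\otimes\mathcal{J}(X,B+D))=0$, so sections extend from the centre to $H^0(X,\mathcal{O}_X(mA))$ and separate the two points once $m>\lambda+d$.

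The principal obstacle---and the reason the result was only conjectural---is the total absence of control on the coefficients of $B$. Every standard boundedness or effective-birationality statement for log Calabi--Yau pairs assumes the boundary coefficients lie in a DCC or finite set, both to secure the volume lower bound and to run the lifting step; this is precisely the hypothesis that Theorem~\ref{Main theorem: discreteness of volume} dispenses with. Its discreteness conclusion furnishes the uniform lower bound $v_0$ with no constraint on $B$, and, applied to the auxiliary integral divisors arising in the cutting process, keeps every volume entering the argument inside a fixed discrete set. Carrying out the isolated-non-klt-centre construction under the bare $\epsilon$-lc hypothesis---in place of a DCC-boundary hypothesis---and feeding this coefficient-free input into the effective-birationality and upgrade machinery is where the main work of the proof lies.
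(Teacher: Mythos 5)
Your argument inverts the paper's logical architecture and, in doing so, leaves the genuinely hard step unproved. In the paper, \emph{both} main theorems are immediate corollaries of Theorem \ref{log boundedness of PCY} (log boundedness of the set $\mathscr{C}(d,v,\epsilon)$): the discreteness statement, Theorem \ref{Main theorem: discreteness of volume}, is \emph{deduced from} boundedness (via passing to a canonical model), not the other way around. So using Theorem \ref{Main theorem: discreteness of volume} as your starting point is circular relative to this paper, and moreover the only thing you extract from it---the uniform lower bound $v_0\le A^d$---is available far more cheaply: by \cite[Theorem 1.1]{Bir23}, $|mA|$ is birational for a fixed $m=m(d,\epsilon)$, whence $\vol(A)\ge m^{-d}$, with no control on the coefficients of $B$ required. (This also makes your proposed re-derivation of effective birationality via Nadel vanishing and tie-breaking unnecessary; it is exactly Birkar's theorem, which the paper simply cites.) The discreteness of volumes is therefore doing no real work in your outline.

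The fatal gap is the passage from birational boundedness to boundedness, which you dispatch with ``run the standard minimal-model-theoretic upgrade of Hacon--McKernan--Xu \dots one recovers $X$ together with a uniformly very ample Cartier multiple of $A$ inside a bounded family.'' That machinery is specific to varieties of general type, where pluricanonical maps single out a unique canonical model; for Calabi--Yau type varieties the birational class contains a priori infinitely many small modifications (flops), all sharing the same volumes, and nothing in your sketch distinguishes the particular model $X$ carrying the ample $A$, nor controls the Cartier index of $A$, which is what a uniformly very ample multiple requires. This is precisely where the paper's actual content lies: Theorems \ref{boundedness of pseudo-effective threshold} and \ref{boundedness of polarized CY in codimension 1} establish that the \emph{couples} $(X,A)$ are log bounded in codimension one (via MMP, Mori fiber spaces, the canonical bundle formula, and Birkar's boundedness of Fano type fibrations); Lemma \ref{e-lc threshold} then gives a uniform $\delta$ with $(X,B+\delta D)$ klt for \emph{every} $D\in|A|_{\mathbb{Q}}$, transferred back to $X$ across the small modification by negativity; and Lemma \ref{boundedness of Cartier index by volume and lct}, resting on the log canonical volume technology of \cite{HLQZ25}, converts this uniform klt threshold plus the volume lower bound into a bound on the Cartier index of every integral divisor on $X$, after which Koll\'ar's effective base point freeness \cite{Kol93} produces the very ample multiple $r'A$ and $\vol(A)\le v$ closes the Hilbert scheme argument. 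None of these ingredients---in particular the Cartier index bound, which is the new coefficient-free mechanism replacing bounded klt complements---appears in your proposal, so the step from a birationally bounded family to a bounded one remains unjustified.
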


	\section{Preliminaries}
	We will use the same notation as in \cite{KM98} and \cite{Laz04}.

	A \textbf{couple} consists of a normal variety $X$ and an effective $\mathbb{Q}$-divisor $B$ on $X$. A couple is called a \textbf{pair} if further $\mathrm{coeff}(B)\subset [0,1]$ and $K_X+B$ is $\mathbb{Q}$-Cartier. For a number $\epsilon\in (0,1)$, a pair $(X,B)$ is called $\epsilon$-\textbf{log canonical} ($\epsilon$-\textbf{lc}) if the total log discrepancy $\text{tld}(X,B)\geq \epsilon$.

	Let $(X,B),(Y,B_Y)$ be two sub-pairs and $h:Y\rightarrow X$ a projective birational morphism, we say $(Y,B_Y)\rightarrow (X,B)$ is a \textbf{crepant birational morphism} if $B_Y$ is the $\mathbb{Q}$-divisor such that $K_Y+B_Y\sim_{\mathbb{Q}}h^*(K_X+B)$. 
	A crepant birational morphism $(Y,B_Y)\rightarrow (X,B)$ is called a $\mathbb{Q}$-\textbf{factorization} if $Y\rightarrow X$ is small and $Y$ is $\mathbb{Q}$-factorial. Two sub-pairs $(X_1,B_1)$ and $(X_2,B_2)$ are \textbf{crepant birationally equivalent} if and only if there exists a sub-pair $(Y,B_Y)$ and two crepant birational morphisms $(Y,B_Y)\rightarrow (X_i,B_i),i=1,2$.
	
	Given two $\mathbb{Q}$-divisors $A,B$, $A\sim_{\mathbb{Q}} B$ means that there exists $m\in \mathbb{N}$ such that $m(A-B)\sim 0$. We say $A\in |B|_{\mathbb{Q}}$ if $A\sim_{\mathbb{Q}} B$ and $A\geq 0$. Let $X$ be a projective variety and $D$ a $\mathbb{Q}$-divisor on $X$, we use $\Co_X(mD)$ to denote the divisorial sheaf $\Co_X(\lfloor mD \rfloor)$.
	
	Let $X\rightarrow T$ be a contraction with generic fiber $X_\eta$ and $B$ a $\mathbb{Q}$-Cartier $\mathbb{Q}$-divisor on $X$. For a point $t\in T$ or an open subset $U\subset T$, we denote $B|_{X_t}$ by $B_t$, $B|_{X_U}$ by $B_U$, and $B|_{X_\eta}$ by $B_\eta$ for simplicity of notation.

	\subsection{Log boundedness}
	\begin{defn}
		Let $\mathscr{S}$ be a set of $d$-dimensional couples. We say $\mathscr{S}$ is \textbf{bounded} if there exists $v>0$ such that for every $(X,B)\in \mathscr{S}$, there exists a very ample divisor $A$ on $X$ such that $A^d\leq v$. We say $\mathscr{S}$ is \textbf{log bounded }if further there exists $c\in \mathbb{Q}_+\cap (0,1)$ such that $B=cD$ for an integral divisor $D$ and $A-B$ is pseudo-effective. 
	\end{defn}
	
	By the boundedness of the Chow variety, see \cite[\S 1.3]{Kol96}, a set of $d$-dimensional couples $\mathscr{S}$ is bounded (resp. log bounded) if and only if there exists a flat morphism $\Cx\rightarrow T$ of relative dimension $d$ (resp. a flat morphism $\Cx\rightarrow T$ of relative dimension $d$ with an effective $\mathbb{Q}$-divisor $\Cb$ on $\Cx$ whose support has codimension one on every fiber) over a scheme of finite type, such that for every $(X,B)\in \mathscr{S}$, there exists a closed point $t\in T$ and an isomorphism $X\cong \Cx_t$ (resp. $(X,B)\cong (\Cx_t,\Cb_t)$). We call $\Cx\rightarrow T$ (resp. $(\Cx,\Cb)\rightarrow T$) a \textbf{parametrizing family} of $\mathscr{S}$.

	\subsection{Boundedness of Cartier index}
	We need to following definition and result on boundedness of Cartier index of integral divisors from \cite{HLQZ25}, for more relative results, see \cite{XZ21}.
	\begin{defn}[{\cite[3.2]{HLQZ25}}]
		For a $\mathbb{Q}$-Cartier $\mathbb{Q}$-divisor $D$ on a projective normal variety $X$, a graded linear series $V_\bullet$ is a sequence of subspaces $V_m\in H^0(X,\mathcal{O}_X(mD)),m\geq 0$ such that $V_l\cdot V_m\subset V_{m+l}$ for every $m,l$. Its volume is defined as 
		$$\vol(V_\bullet):=\limsup_{m\rightarrow +\infty} \frac{\mathrm{dim}(V_m)}{m^d/d!},$$
		where $d=\mathrm{dim}(X).$
		
		We say that a graded linear series $V_\bullet$ is eventually birational, if the rational map $X\dashrightarrow \mathbb{P}(V_m^\wedge)$ induced by the linear system $|V_m|$ is birational onto its image for all sufficiently large $m$ such that $V_m\neq \emptyset$. 
		
		Let $(X,B)$ be a projective pair and $D$ a big $\mathbb{Q}$-divisor such that $(X,B+D')$ is klt for some $D'\in |D|_{\mathbb{Q}}$. A graded linear series $V_\bullet$ of $D$ is called admissible with respect to $(X,B)$ if it is eventually birational and $(X,B+\Gamma)$ is lc for all $m$ such that $V_m\neq \emptyset$ and $\Gamma\in \frac{1}{m}V_m$.
		We then define the log canonical volume of $D$ with respect to $(X,B)$, denoted by $\hvol_{X,B}(D)$, to be
		\begin{equation*}
			\begin{aligned}
				\hvol_{X,B}(D):=\sup \{\vol(V_\bullet)\mid V_\bullet\text{ is an admissible graded linear series of }D\\
				\text{ with respect to }(X,B)\}.
			\end{aligned}
		\end{equation*}
		According to \cite[Lemma 3.5]{HLQZ25}, we have $\hvol_{X,B}(D)>0$.
	\end{defn}

	\begin{lemma}\label{boundedness of Cartier index by lc volume}
		Let $(X,B)$ be a projective $d$-dimensional pair and $D$ a big $\mathbb{Q}$-divisor such that $(X,B+D')$ is klt for some $D'\in|D|_{\mathbb{Q}}$.
	 	Then the Cartier index of any $\mathbb{Q}$-Cartier integral divisor on $X$ is at most $\lfloor \frac{d^d}{\hvol_{X,B}(D)}\rfloor !.$
	 	\begin{proof}
	 		This follows directly from \cite[Lemma 3.13 and Lemma 3.14]{HLQZ25}.
	 	\end{proof}
	\end{lemma}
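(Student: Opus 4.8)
The plan is to bound the Cartier index one point at a time and then assemble the local bounds, which is where the factorial appears. Write $\ell_x$ for the Cartier index of the given $\mathbb{Q}$-Cartier integral divisor $L$ at a closed point $x\in X$, so that the global Cartier index equals $\mathrm{lcm}_{x}\,\ell_x$. If I can prove the pointwise estimate $\ell_x\le N$, where $N:=\lfloor d^d/\hvol_{X,B}(D)\rfloor$, then the global index divides $\mathrm{lcm}(1,2,\dots,N)$, which in turn divides $N!$, giving the stated bound. Thus the whole problem reduces to the single inequality $\ell_x\cdot \hvol_{X,B}(D)\le d^d$ at each point $x$, and the passage from this local estimate to the global factorial bound is the elementary half of the argument.

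For the local estimate I would combine two ingredients. First, the Cartier index of any Weil divisor at $x$ divides the order of the torsion part of the local class group, and by the theory of local normalized volumes the order of the local fundamental group (hence of this torsion) is at most $d^d/\mathrm{nv}(x)$, where $\mathrm{nv}(x)$ denotes the local normalized volume of the klt singularity $(X,B)\ni x$; this is exactly the finiteness-type input of \cite{XZ21}, and it produces the universal constant $d^d$ (the normalized volume of a smooth point). Second, I would show the global-to-local comparison $\hvol_{X,B}(D)\le \mathrm{nv}(x)$ for every $x$. The mechanism here is the ``for all members'' clause in the definition of admissibility: if $V_\bullet$ is admissible and eventually birational with $\vol(V_\bullet)$ close to $\hvol_{X,B}(D)$, then the requirement that $(X,B+\Gamma)$ be log canonical for \emph{every} $\Gamma\in\tfrac1m V_m$ forces the sections of $V_\bullet$ to have controlled vanishing along every divisorial valuation centered at $x$. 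Localizing $V_\bullet$ at $x$ and comparing with the Okounkov-body/valuative description of $\mathrm{nv}(x)$ then yields $\vol(V_\bullet)\le \mathrm{nv}(x)$, and taking the supremum over admissible $V_\bullet$ gives $\hvol_{X,B}(D)\le \mathrm{nv}(x)$. Chaining the two inequalities produces $\ell_x\le d^d/\mathrm{nv}(x)\le d^d/\hvol_{X,B}(D)$, which is the required $\ell_x\le N$.

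The main obstacle I anticipate is the global-to-local volume comparison $\hvol_{X,B}(D)\le \mathrm{nv}(x)$: one must turn the log canonicity of \emph{all} members of $\tfrac1m V_m$ into a genuine lower bound on log discrepancies $A_{X,B}(E)$ for valuations $E$ centered at $x$, so that the volume of the admissible series is dominated by the infimum $\inf_E A_{X,B}(E)^d/\vol(E)$ defining $\mathrm{nv}(x)$. This requires care because $V_\bullet$ is a subseries of a fixed big divisor $D$ and its eventual birationality must be used to guarantee that it ``sees'' enough of the local geometry at $x$, while the constant $d^d$ must be tracked exactly through the normalized-volume estimate of \cite{XZ21}. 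By contrast, the reduction to local indices, the divisibility of $\ell_x$ into the local class group torsion, and the final $\mathrm{lcm}\mid N!$ bookkeeping are all routine, so I expect the comparison of the log canonical volume with the local normalized volume to be the crux of the proof.
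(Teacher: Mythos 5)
Your plan is essentially the paper's proof: the paper disposes of this lemma by citing \cite[Lemmas 3.13 and 3.14]{HLQZ25}, and those two lemmas are precisely your two steps---(i) the global-to-local comparison $\hvol_{X,B}(D)\leq \widehat{\mathrm{vol}}(x,X,B)$ at every closed point, extracted from the lc condition on \emph{all} members of an admissible series via the estimate $v(\Gamma)\leq A_{X,B}(v)$ for valuations $v$ centered at $x$, and (ii) the local Cartier index bound $\ell_x\leq d^d/\widehat{\mathrm{vol}}(x,X,B)$ obtained from the index-one cyclic cover together with the finite degree formula of \cite{XZ21} and the bound $\widehat{\mathrm{vol}}\leq d^d$, followed by the elementary $\mathrm{lcm}(1,\dots,N)\mid N!$ bookkeeping. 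One small correction: the local normalized volume is $\inf_E A_{X,B}(\mathrm{ord}_E)^d\cdot \mathrm{vol}(\mathrm{ord}_E)$, a product rather than the quotient $A_{X,B}(E)^d/\mathrm{vol}(E)$ you wrote, and the product is indeed what your Okounkov-body estimate $\dim V_m\lesssim \mathrm{vol}(v)\,(mA_{X,B}(v))^d/d!$ delivers.
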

	\begin{lemma}\label{boundedness of Cartier index by volume and lct}
		Fix $d\in \mathbb{N}$, $t\in (0,1)$, and $\alpha>0$, then there exists $r\in\mathbb{N}$ depending only on $d,t,v$ such that:
		
		If $(X,B)$ is a projective klt pair, $D$ a big $\mathbb{Q}$-divisor such that $\vol(D)\geq \alpha$ and $(X,B+tD')$ is klt for every $D'\in |D|_\mathbb{Q}$. Then for every $\mathbb{Q}$-Cartier integral divisor $R$ on $X$, $rR$ is Cartier.
		\begin{proof}
			Because $(X,B+tD')$ is klt for every $D'\in |D|_\mathbb{Q}$, then $V_m:=H^0(X,\mathcal{O}_X(mtD))$ is an admissible graded linear series of $tD$ with respect to $(X,B)$ and $\hvol_{X,B}(tD)=\vol(tD)\geq t^d\alpha$. Then by Lemma \ref{boundedness of Cartier index by lc volume}, we only need to choose 
			$$r:=\lfloor \frac{d^d}{t^d\alpha}\rfloor !.$$
		\end{proof}
	\end{lemma}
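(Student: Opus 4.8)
The plan is to deduce the statement from Lemma \ref{boundedness of Cartier index by lc volume}, applied to the big divisor $tD$ in place of $D$. To do so I must exhibit a single admissible graded linear series of $tD$ whose volume is bounded below in terms of $\alpha$, and the natural candidate is the complete series $V_\bullet$ given by $V_m=H^0(X,\Co_X(mtD))$.

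First I would verify that $V_\bullet$ is admissible with respect to $(X,B)$. Eventual birationality of $V_\bullet$ is immediate from the bigness of $tD$. The log canonical condition---that $(X,B+\Gamma)$ is lc for every $\Gamma\in\frac1m V_m$---is the only genuine point, and I expect it to be the main (though mild) obstacle, because $\Co_X(mtD)=\Co_X(\lfloor mtD\rfloor)$ involves a round-down, so such a $\Gamma$ is only $\mathbb{Q}$-linearly equivalent to $\frac1m\lfloor mtD\rfloor$ rather than to $tD$ itself. To bridge this I would set $E:=\frac1{mt}\{mtD\}=D-\frac1{mt}\lfloor mtD\rfloor\geq 0$ and note that $D'':=\frac1t\Gamma+E$ is an effective $\mathbb{Q}$-divisor with $D''\sim_{\mathbb{Q}}D$, that is, $D''\in|D|_{\mathbb{Q}}$, and with $tD''\geq\Gamma\geq 0$. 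The hypothesis then gives that $(X,B+tD'')$ is klt, and since $0\le\Gamma\le tD''$ the pair $(X,B+\Gamma)$ is klt, in particular lc. This establishes admissibility.

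Granting admissibility, I obtain $\hvol_{X,B}(tD)\geq\vol(V_\bullet)=\vol(tD)=t^d\vol(D)\geq t^d\alpha$, using the degree-$d$ homogeneity of the volume. Note also that $(X,B+(tD)')$ is klt for some $(tD)'\in|tD|_{\mathbb{Q}}$ (take $(tD)'=tD'$ for any $D'\in|D|_{\mathbb{Q}}$), so the hypotheses of Lemma \ref{boundedness of Cartier index by lc volume} hold for $tD$. Applying it to $tD$ ensures that $\lfloor\frac{d^d}{\hvol_{X,B}(tD)}\rfloor!\cdot R$ is Cartier for every $\mathbb{Q}$-Cartier integral divisor $R$ on $X$. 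Finally, since $\hvol_{X,B}(tD)\geq t^d\alpha$ forces $\lfloor\frac{d^d}{\hvol_{X,B}(tD)}\rfloor\leq\lfloor\frac{d^d}{t^d\alpha}\rfloor$, and since $a!\mid b!$ whenever $a\le b$ are nonnegative integers, we get $\lfloor\frac{d^d}{\hvol_{X,B}(tD)}\rfloor!\mid\lfloor\frac{d^d}{t^d\alpha}\rfloor!$. Hence the choice $r:=\lfloor\frac{d^d}{t^d\alpha}\rfloor!$, which depends only on $d,t,\alpha$, makes $rR$ Cartier for every $\mathbb{Q}$-Cartier integral divisor $R$, as required.
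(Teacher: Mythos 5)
Your proof is correct and follows essentially the same route as the paper: both take the complete graded linear series $V_m=H^0(X,\Co_X(mtD))$ as an admissible series for $tD$, deduce $\hvol_{X,B}(tD)\geq t^d\alpha$, and conclude via Lemma \ref{boundedness of Cartier index by lc volume} with $r=\lfloor \frac{d^d}{t^d\alpha}\rfloor!$. The only difference is that you spell out two details the paper leaves implicit, namely the round-down issue $\Co_X(mtD)=\Co_X(\lfloor mtD\rfloor)$ in the admissibility check and the divisibility $\lfloor \frac{d^d}{\hvol_{X,B}(tD)}\rfloor!\mid\lfloor \frac{d^d}{t^d\alpha}\rfloor!$, both of which are handled correctly.
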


	\section{Proof of main results}
		A projective lc pair $(X,B)$ is called a \textbf{Calabi--Yau pair} if $K_X+B\sim_{\mathbb{Q}} 0$.
		
		We say a projective variety $X$ is of \textbf{Calabi--Yau type} if there exists a $\mathbb{Q}$-divisor $B$ such that $(X,B)$ is a Calabi--Yau pair, we say $X$ is $\epsilon$-\textbf{lc Calabi--Yau type} if $(X,B)$ is a $\epsilon$-lc Calabi--Yau pair. 
		
		Let $X$ be a projective variety of Calabi--Yau type, a $\mathbb{Q}$-divisor $B$ such that 
		$(X,B)$ is a Calabi--Yau pair (resp. a klt Calabi--Yau pair) is called a $\mathbb{Q}$-\textbf{complement} (resp. \textbf{klt $\mathbb{Q}$-complement}) of $X$. We also call $B$ an \textbf{$m$-complement} of $X$ if further $m(K_X+B)\sim 0$ for some $m\in \mathbb{N}$. See \cite{PS09}, \cite{Bir19}, and \cite{Sho20} for more results on complements.
		
		Assume $\mathscr{S}$ is a bounded set of Calabi--Yau type varieties, we are interested in the following questions.
		\begin{enumerate}
			\item Does there exists $m\in \mathbb{N}$ depending only on $\mathscr{S}$ such that every $X\in \mathscr{S}$ has an $m$ complement?
			\item Fix $\epsilon\in (0,1)$, does there exists $m\in \mathbb{N}$ depending only on $\mathscr{S}$ and $\epsilon$ such that if $X\in \mathscr{S}$ is $\epsilon$-lc Calabi--Yau type, then $X$ has a klt $m$-complement?
			\item Fix $\epsilon\in (0,1)$ and $v>0$, does there exists a finite set $\Ci\subset \mathbb{R}_+$ depending only on $\mathscr{S},\epsilon$ and $v$ such that if $X\in \mathscr{S}$ is $\epsilon$-lc Calabi--Yau type and $D$ is an integral divisor on $X$ with $\mathrm{deg}(D)\leq v$, then $\vol(D)\in \Ci$.
		\end{enumerate}
		In this paper we give an affirmative answer to question (3) by bounding the Cartier index on the canonical model. Note if question (2) is true, then question (3) follows directly from the log version of Invariance of plurigenera, see \cite[Theorem 1.8]{HMX13}.
		
		\begin{defn}
			Fix $d,v\in \mathbb{N}$ and $\epsilon\in (0,1)$. We define $\mathscr{C}(d,v,\epsilon)$ to be the set of $d$-dimensional projective couples $(X,A)$ such that
			\begin{itemize}
				\item $X$ is $\epsilon$-lc Calabi--Yau type, and
				\item $A$ is an ample integral divisor on $X$ such that $\vol(A)\leq v$.
			\end{itemize}
		\end{defn}
		
		According to \cite[Theorem 1.3]{JJZ25}, $\mathscr{C}(d,v,\epsilon)$ is bounded in codimension one. Suppose $X\dashrightarrow X'$ is an isomorphism in codimension one, it is easy to see that $X$ is Calabi--Yau type if and only if $X'$ is Calabi--Yau type. Furthermore, if $\Delta$ is an $m$-complement of $X$, then its strict transform $\Delta'$ on $X'$ is an $m$-complement of $X'$ and $(X,\Delta)$ is crepant birationally equivalent to $(X',\Delta')$.
		Thus, question (2) predicts that there exists $m$ depending only on $d,v,\epsilon$ such that every $X\in \mathscr{C}(d,v,\epsilon)$ has a klt $m$-complement and $\mathscr{C}(d,v,\epsilon)$ is bounded according to \cite[Corollary 1.8]{Bir23}.
		However, constructing bounded complements—particularly klt ones—is challenging, making it difficult to prove boundedness via complements.
		
		First, we use Theorem \ref{boundedness of pseudo-effective threshold} and Theorem \ref{boundedness of polarized CY in codimension 1} to show that $\mathscr{C}(d,v,\epsilon)$ is not only bounded, but also log bounded in codimension one.
		
		\begin{thm}\label{boundedness of pseudo-effective threshold}
			Fix $d,v\in\mathbb{N}$ and $\epsilon\in (0,1)$. Then there exists $\delta>0$ satisfying the following: 
			
			If $(X,A)\in \mathscr{C}(d,v,\epsilon)$ is a couple, then $K_X+\delta A$ is not pseudo-effective.
			\begin{proof}
				Suppose the result is not true, then we have a sequence of $\epsilon$-lc Calabi--Yau pairs $(X_i,B_i),i\in \mathbb{N}$ with integral ample divisors $A_i$ such that $A_i^d\leq v$, and a sequence of descending positive number $\delta_i\rightarrow 0$ such that $K_{X_i}+\delta_iA_i$ is big.
				
				After taking a $\mathbb{Q}$-factorization, we may assume $X_i$ is $\mathbb{Q}$-factorial. By \cite[Theorem 1.1]{Bir23}, there exists a fixed $m\in\mathbb{N}$ such that $|mA_i|$ defines a birational map $X_i\dashrightarrow W_i$. Let $h_i:Y_i\rightarrow X_i,g_i:Y_i\rightarrow W_i$ be a common resolution, then 
				$$h_i^*(mA_i)=g_i^*H_i+F_i,$$ 
				where $H_i$ is a very ample divisor on $W_i$ and $F_i$ is effective. Because $\mathrm{vol}(A_i)\leq v$, then $\mathrm{vol}(H_i)\leq m^dv$. By the boundedness of Chow varieties, $W_i$ is in a bounded family $\Cw\rightarrow \Ct$. We assume $W_i$ is isomorphic to $\Cw_{t_i}$ for a closed point $t_i\in \Ct$ and $H_i=\Ch_{t_i}:=\Ch|_{\Cw_{t_i}}$, where $\Ch$ is the corresponding relatively very ample divisor on $\Cw$ over $\Ct$.
				
				After passing to a stratification of $\Ct$, we may assume $\Cw\rightarrow \Ct$ has a fiberwise log resolution $f:\Cw'\rightarrow \Cw$.  Because $\Ch$ is relatively very ample, there exist $r\in\mathbb{N}$ and a relatively very ample divisor $\Ch'$ on $\Cw'$ over $\Ct$ such that 
				$$\Ch' +\Ce\sim rf^*\Ch,$$
				for an effective divisor $\Ce$. After passing to a stratification of $\Ct$, we may also assume $\Ce_t$ is effective for every $t\in\Ct$. We replace $\Cw$ by $\Cw'$, $W_i$ by $\Cw'_{t_i}$, $m$ by $rm$, $H_i$ by $\Ch'_{t_i}$ and $F_i$ by $rF_i+\Ce_{t_i}$, then we may assume $W_i$ is smooth and keep $h_i^*(mA_i)=g_i^*H_i+F_i$.
				
				Let $E_i,E'_i$ be the effective $h_i$-exceptional $\mathbb{Q}$-divisors on $Y_i$ such that
				$E_i,E'_i$ have no common component and
				$h_i^*K_{X_i}+E'_i=K_{Y_i}+E_i.$ Since $K_{X_i}+\delta_iA_i$ is big, $K_{Y_i}+E_i+\delta_ih_i^*A_i$ is big. Furthermore, its pushforward on $W_i$
				$$K_{W_i}+(g_i)_*E_i+\frac{\delta_i}{m}(H_i+(g_i)_*F_i)\sim_{\mathbb{Q}} (g_i)_*(K_{Y_i}+E_i+\delta_ih_i^*A_i)$$ 
				is also big. 
				
				Next we show that $(W_i,\Supp(H_i+(g_i)_*(E_i+F_i)))$ is log bounded and $\mathrm{coeff}(H_i+(g_i)_*F_i)$ is bounded from above.
				
				By the projection formula, we have $\mathrm{red}((g_i)_*(E_i+F_i)).H_i^{d-1}=\mathrm{red}(E_i+F_i).(g_i^*H_i)^{d-1}.$ Because $g_i^*H_i$ is base point free, by \cite[Lemma 3.2]{HMX13}, we have 
				\begin{equation*}
					\begin{aligned}
						& \mathrm{red}(E_i+F_i).(g_i^*H_i)^{d-1}\\
						\leq &2^d\mathrm{vol}(K_{Y_i}+\mathrm{red}(E_i)+g_i^*H_i+\lceil F_i\rceil)\\
						=&2^d\mathrm{vol}(K_{Y_i}+\mathrm{red}(E_i)+\lceil F_i\rceil-F_i+mh_i^*A_i).
					\end{aligned}
				\end{equation*}
				Because both $mA_i$ and $H_i$ are integral, then $\lceil F_i\rceil-F_i$ is exceptional over $X_i$. Also because
				$E_i$ is exceptional over $X_i$, then
				$$\mathrm{vol}(K_{Y_i}+\mathrm{red}(E_i)+\lceil F_i\rceil-F_i+mh_i^*A_i)\leq \mathrm{vol}(K_{X_i}+mA_i)\leq m^dv.$$
				Thus $\mathrm{red}(E_i+F_i).(g_i^*H_i)^{d-1}\leq (2m)^d v$. Because $H_i^d=\vol(H_i)\leq m^dv$, then by projection formula, $\mathrm{red}(H_i+(g_i)_*(E_i+F_i)).H_i^{d-1}\leq (2^d+1)m^d v$. Thus $(W_i,\Supp(H_i+(g_i)_*(E_i+F_i)))$ is log bounded.

				By log boundedness, to prove $\mathrm{coeff}(H_i+(g_i)_*F_i)$ is bounded from above, we only need to prove the degree $(H_i+(g_i)_*F_i).H_i^{d-1}=(g_i^*H_i+F_i).(g_i^*H_i)^{d-1}$ is bounded from above. Because $g_i^*H_i+F_i=h_i^*A_i$ is nef, then
				$$(g_i^*H_i+F_i).(g_i^*H_i)^{d-1}\leq (mh_i^*A_i+g_i^*H_i)^d=\mathrm{vol}(mh_i^*A_i+g_i^*H_i).$$
				Also because $g_i^*H_i\leq mh_i^*A_i$, then 
				$$\mathrm{vol}(mh_i^*A_i+g_i^*H_i)\leq \mathrm{vol}(2mh_i^*A_i)\leq (2m)^dv.$$
				Thus $\mathrm{coeff}(H_i+(g_i)_*F_i)$ is bounded from above.

				Since $(W_i,\Supp(H_i+(g_i)_*(E_i+F_i)))$ is bounded and $\mathrm{coeff}(H_i+(g_i)_*F_i)$ is bounded from above, we may assume there exists a reduced divisor $\Cb$ on $\Cw$ such that $H_i+(g_i)_*F_i\leq \Cb_{t_i}$.
				Let $\Cd$ be a divisor on $\Cw$ such that $\mathrm{red}((g_i)_*E_i)\leq  \Cd_{t_i}$ for every $i\in \mathbb{N}$. After passing to a stratification of $\Ct$, we may assume $(\Cw,\Cd)\rightarrow \Ct$ has a fiberwise log resolution $(\Cw',\Cd')\rightarrow \Ct$, where $\Cd'$ is the strict transform of $\Cd$ plus the exceptional divisor. Because $(\Cw',\Cd')$ is log smooth over $\Ct$, $(\Cw',(1-\epsilon)\Cd')$ is $\epsilon$-lc. Let $(\Cw'',\Cc)\rightarrow \Cw'$ be a fiberwise terminalization of $(\Cw',(1-\epsilon)\Cd')$, where $\Cc$ is the $\mathbb{Q}$-divisor such that $K_{\Cw''}+\Cc$ is equal to the pullback of $K_{\Cw'}+(1-\epsilon)\Cd'$. Define $\Cb'$ and $\Cb''$ to be the pullback of $\Cb$ on $\Cw'$ and $\Cw''$.
				
				Write $W_i'':=\Cw''_{t_i}$ and $W'_i:=\Cw'_{t_i}$. We may assume there are birational morphisms $g'_i:Y_i\rightarrow W'_i$ and $g''_i:Y_i\rightarrow W''_i$. Because $X_i$ is $\epsilon$-lc, then $E_i\leq (1-\epsilon)\mathrm{red}(E_i)$. 
				Also because 
				$K_{W'_i}+(g'_i)_*E_i+\delta_i(g'_i)_*h_i^*A_i$ is big, $(g'_i)_*E_i\leq (1-\epsilon)\mathrm{red}((g'_i)_*E_i)\leq (1-\epsilon)\Cd'_{t_i}$, and $(g'_i)_*h_i^*A_i\leq \Cb'_{t_i}$, then 
				$$K_{\Cw'_{t_i}}+(1-\epsilon)\Cd'_{t_i}+\delta_i \Cb'_{t_i}$$
				is big. 
				
				Because $h_i^*A_i$ is nef, $(g_i)_*h_i^*A_i\leq \Cb_{t_i}$, and $\Cb'_{t_i}$ is the pullback of $\Cb_{t_i}$ on $\Cw'_{t_i}$, then $h_i^*A_i\leq (g'_i)^*\Cb'_{t_i}$. Note adding exceptional divisor does not add global sections, then we have
				$$|r(K_{Y_i}+E_i+\delta_ih_i^*A_i)|\subset |r(K_{Y_i}+E_i+\delta_i(g'_i)^*\Cb'_{t_i})|\subset |r(K_{\Cw'_{t_i}}+(1-\epsilon)\Cd'_{t_i}+\delta_i \Cb'_{t_i})|$$
				for any $r\in \mathbb{N}$ sufficiently divisible. Also because $K_{\Cw''}+\Cc$ is equal to the pullback of $K_{\Cw'}+(1-\epsilon)\Cd'$ and $\Cb''$ equals to the pullback of $\Cb'$, we have
				$$|r(K_{\Cw'_{t_i}}+(1-\epsilon)\Cd'_{t_i}+\delta_i \Cb'_{t_i})|=|r(K_{\Cw''_{t_i}}+\Cc_{t_i}+\delta_i\Cb''_{t_i})|.$$
				Thus $|r(K_{Y_i}+E_i+\delta_ih_i^*A_i)|\subset |r(K_{\Cw''_{t_i}}+\Cc_{t_i}+\delta_i\Cb''_{t_i})|$, and then we have
				\begin{equation*}
					\begin{aligned}
						&|r(K_{Y_i}+E_i+\delta_ih_i^*A_i)|\\ 
						\subset& |r(K_{\Cw''_{t_i}}+\Cc_{t_i}\wedge (g''_i)_*E_i+\delta_i\Cb''_{t_i})| \\
						\subset& |r(K_{\Cw''_{t_i}}+\Cc_{t_i}\wedge (1-\epsilon)\mathrm{red}((g''_i)_*E_i)+\delta_i\Cb''_{t_i})|.
					\end{aligned}
				\end{equation*}
				
				Because $\Cc$ has finitely many components and $\mathrm{coeff}((1-\epsilon)\mathrm{red}((g''_i)_*E_i))=1-\epsilon$ is fixed, then after passing to a subsequence of $\{t_i,i\in \mathbb{N}\}$, we may assume there exists a $\mathbb{Q}$-divisor $\Cr\leq \Cc$ such that 
				$\Cc_{t_i}\wedge (1-\epsilon)\mathrm{red}((g'_i)_*E_i)=\Cr_{t_i}$ for every $i\in \mathbb{N}$.
				
				Because $K_{Y_i}+E_i+\delta_ih_i^*A_i$ is big for every $i\in\mathbb{N}$, then $K_{\Cw''_{t_i}}+\Cr_{t_i}+\delta_i\Cb''_{t_i}$ is big for every $i\in\mathbb{N}$. Since $\delta_i\rightarrow 0$, by invariance of plurigenera, after passing to a subsequence of $\{t_i\}$, $K_{\Cw''_{t_i}}+\Cr_{t_i}$ is pseudo-effective.
				
				Because $(\Cw''_{t_i},\Cc_{t_i})$ is terminal and $\Cr\leq \Cc$, then $(\Cw''_{t_i},\Cr_{t_i})$ is also terminal. Also because $\Cr_{t_i}\leq (g''_i)_*E_i$, then 
				$$(g''_i)^*(K_{\Cw''_{t_i}}+\Cr_{t_i})\leq K_{Y_i}+E_i,$$
				which means $K_{Y_i}+E_i$ is pseudo-effective. Because $K_{X_i}=(h_i)_*(K_{Y_i}+E_i)$, then $K_{X_i}$ is pseudo-effective, which is a contradiction since $K_{X_i}\sim_{\mathbb{Q}} -B_i$ is not pseudo-effective.
			\end{proof}
			
		\end{thm}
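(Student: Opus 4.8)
The plan is to read the statement as a uniform lower bound on the pseudo-effective threshold and to argue by contradiction. If no uniform $\delta$ existed, I would extract a sequence of $d$-dimensional $\epsilon$-lc Calabi--Yau pairs $(X_i,B_i)$ with ample integral divisors $A_i$ satisfying $A_i^d\leq v$, together with positive numbers $\delta_i\to 0$ for which $K_{X_i}+\delta_iA_i$ is big. Since $K_{X_i}\sim_{\mathbb{Q}}-B_i$ with $B_i$ a nonzero effective divisor, $K_{X_i}$ is \emph{not} pseudo-effective, so the whole point is to show that bigness of $K_{X_i}+\delta_iA_i$ with $\delta_i\to 0$ forces $K_{X_i}$ itself to become pseudo-effective in the limit. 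The engine that makes ``letting $\delta_i\to 0$'' rigorous across a varying sequence is boundedness combined with invariance of plurigenera.

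The immediate difficulty is that the $X_i$ are not known to be bounded. I would bypass this by polarizing: effective birationality for polarized $\epsilon$-lc Calabi--Yau varieties (\cite[Theorem 1.1]{Bir23}) supplies a fixed $m$ so that $|mA_i|$ induces a birational map $X_i\dashrightarrow W_i$, and since $\vol(mA_i)\leq m^dv$ the images $W_i$ have bounded degree and hence lie in a bounded family $\Cw\to\Ct$ by boundedness of Chow varieties, with $H_i$ the restriction of a relatively very ample $\Ch$. Passing to a common resolution $h_i\colon Y_i\to X_i$, $g_i\colon Y_i\to W_i$ and writing $K_{Y_i}+E_i=h_i^*K_{X_i}+E_i'$ and $h_i^*(mA_i)=g_i^*H_i+F_i$, the pushforward $K_{W_i}+(g_i)_*E_i+\frac{\delta_i}{m}(H_i+(g_i)_*F_i)$ is again big, so the problem is transported onto the bounded family.

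The technical core is to upgrade boundedness of the $W_i$ to genuine log boundedness of the pairs carrying these transported boundaries. For the \emph{support} I would bound $\mathrm{red}((g_i)_*(E_i+F_i))\cdot H_i^{d-1}$ via base-point-freeness of $g_i^*H_i$ and \cite[Lemma 3.2]{HMX13}, reducing to a volume that is $\leq m^dv$ because $E_i$ and $\lceil F_i\rceil-F_i$ are exceptional over $X_i$; for the \emph{coefficients} I would bound $(H_i+(g_i)_*F_i)\cdot H_i^{d-1}$ from above using that $h_i^*A_i$ is nef and $g_i^*H_i\leq mh_i^*A_i$. This confines $\Supp(H_i+(g_i)_*(E_i+F_i))$ and a bound on $\mathrm{red}((g_i)_*E_i)$ inside fixed divisors $\Cb,\Cd$ on $\Cw$. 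After stratifying $\Ct$, taking a fiberwise log resolution, and then a fiberwise terminalization of $(\Cw',(1-\epsilon)\Cd')$, all the data sit in one terminal family $(\Cw'',\Cc)$; here the $\epsilon$-lc hypothesis enters as $E_i\leq(1-\epsilon)\mathrm{red}(E_i)$, so the transported boundaries stay dominated by the fixed divisors of the family.

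Finally I would run invariance of plurigenera inside this fixed terminal family. Since adding exceptional divisors does not change sections, bigness of $K_{Y_i}+E_i+\delta_ih_i^*A_i$ passes to bigness of $K_{\Cw''_{t_i}}+\Cr_{t_i}+\delta_i\Cb''_{t_i}$, where (after a subsequence) $\Cr\leq\Cc$ stabilizes the minimum $\Cc_{t_i}\wedge(1-\epsilon)\mathrm{red}((g''_i)_*E_i)$. Because $\delta_i\to 0$ and the family is fixed, invariance of plurigenera yields, along a subsequence, that $K_{\Cw''_{t_i}}+\Cr_{t_i}$ is pseudo-effective; terminality together with $\Cr_{t_i}\leq(g''_i)_*E_i$ then gives $(g''_i)^*(K_{\Cw''_{t_i}}+\Cr_{t_i})\leq K_{Y_i}+E_i$, so $K_{X_i}=(h_i)_*(K_{Y_i}+E_i)$ is pseudo-effective, contradicting $K_{X_i}\sim_{\mathbb{Q}}-B_i$. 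I expect the main obstacle to be precisely the log-boundedness step: bounding the support and the coefficients of the transported boundaries \emph{simultaneously}, while arranging a terminal fiberwise model in which invariance of plurigenera applies and in which the final minimum operation keeps the limiting boundary below $(g''_i)_*E_i$, so that pseudo-effectivity descends all the way back to $X_i$.
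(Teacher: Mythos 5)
Your proposal follows essentially the same route as the paper's proof: arguing by contradiction, using effective birationality (\cite[Theorem 1.1]{Bir23}) and boundedness of Chow varieties to place the images $W_i$ in a bounded family, establishing log boundedness of the transported boundaries via \cite[Lemma 3.2]{HMX13} together with the nefness/degree estimates, passing to a fiberwise terminalization of $(\Cw',(1-\epsilon)\Cd')$, and concluding with invariance of plurigenera as $\delta_i\to 0$ to force pseudo-effectivity of $K_{X_i}$. All the key steps, including the stabilization of $\Cc_{t_i}\wedge(1-\epsilon)\mathrm{red}((g''_i)_*E_i)$ along a subsequence and the descent of pseudo-effectivity through terminality, match the paper's argument.
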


		\begin{thm}\label{boundedness of polarized CY in codimension 1}
			Fix $d,v\in\mathbb{N}$ and $\epsilon\in (0,1)$. Then the set $\mathscr{C}(d,v,\epsilon)$ is log bounded in codimension 1.
			\begin{proof}
				We follow the idea of the proof of \cite[Theorem 11.1]{Bir23b}. By induction on dimension, we may assume the result holds in dimension $d-1$. 
				
				After taking a $\mathbb{Q}$-factorization, we may assume $X$ is $\mathbb{Q}$-factorial.
				If $B=0$, then $(X,A)$ forms a log bounded family according to \cite[Theorem 1.5]{Bir23}.
				So we may assume $B\neq 0$. After running a $K_X$-MMP with scaling of $A$, we will have a birational contraction $g:X\dashrightarrow W$ and a Mori fiber space $h:W\rightarrow Z$. Because $(X,B)$ is $\epsilon$-lc Calabi--Yau and $W$ is $\mathbb{Q}$-factorial, $W$ is $\epsilon$-lc. 
				
				Let $t\in\mathbb{R}$ be the smallest number such that $K_X+tA$ is pseudo-effective, $B_W,A_W$ the pushforward of $B,A$ on $W$, then by the definition of Mori fiber space and Birkar-BAB theorem, we have $t\in \mathbb{Q}$ and
				$$K_W+tA_W\sim_{\mathbb{Q},Z} 0.$$
				By the canonical bundle formula, there exists a generalized pair $(Z,C+R)$ such that
				$$K_{W}+tA_W\sim_{\mathbb{Q}} h^*(K_Z+C+R).$$
				In particular, $K_Z+C+R$ is ample on $Z$.

				By Theorem \ref{boundedness of pseudo-effective threshold}, $t$ is larger than a positive number $\delta$ depending only on $d,v,\epsilon$. By the proof of \cite[Lemma 4.11]{Bir23}, $t$ is in a finite set depending only on $d,\epsilon$. Thus, there exists $p\in\mathbb{N}$ depending only on $t$ such that $p(K_{W}+tA_W)$ is integral. Because a general fiber $W_g$ of $h$ is $\epsilon$-lc Fano, $K_{W_g}+tA_{W_g}\sim_{\mathbb{Q}}0$ and $t$ is in a finite set, then by the Birkar-BAB theorem, $(W_g,tA_{W_g})$ is log bounded. Thus after replacing $p$ by a fixed multiple, we may assume $p(K_{W_g}+tA_{W_g})\sim 0$ and
				$$p(K_W+tA_W)\sim h^*(p(K_Z+C+R)).$$
				According to \cite[Corollary 1.3]{Bir23b}, the multiplicities of $h$ over codimension 1 points of $h$ is bounded, then after replacing $p$ by a fixed multiple, $p(K_Z+C+R)$ is an integral divisor. 
				
				Define $A_Z:=p(K_Z+C+R)$, next we show that there exists $v'>0$ such that $\mathrm{vol}(A_Z)\leq v'$. Let $\phi:V\rightarrow X,\psi:V\rightarrow W$ be a common resolution. By \cite[Theorem 1.1]{Bir23}, $|mA|$ defines a birational map for some fixed $m\in\mathbb{N}$. Let $c:=\mathrm{dim}(Z)$, then $(m\phi^* A|_{V_g})^{d-c}\geq 1$, where $V_g$ is a general fiber of $V\rightarrow Z$. Thus $(m\phi^* A)^{d-c}$ is an effective cycle of codimension $d-c$ whose degree on general fibers of $V\rightarrow T$ is $\geq 1$. By the projection formula, we have
				$$(m\phi^* A)^{d-c}.(\psi^*h^* A_Z)^c\geq (A_Z)^c.$$
				
				Because both $A$ and $A_Z$ are ample, then
				\begin{equation*}
					\begin{aligned}
						& (m\phi^* A)^{d-c}.(\psi^*h^* A_Z)^c\\
						\leq & (m\phi^*A+\psi^*h^* A_Z)^d\\
						\leq & \mathrm{vol}(pK_X+(pt+m)A)^d\\
						\leq & (pt+m)^d\mathrm{vol}(A)\\
						\leq & (pt+m)^dv.
					\end{aligned}
				\end{equation*}
				where the second to last inequality comes from $K_X\sim_{\mathbb{Q}}-B$. 
				Then $\mathrm{vol}(A_Z)\leq v'$ for a fixed number $v'>0$ depending only on $d,\epsilon,v$.
				
				By \cite[Theorem 1.1]{Bir23b}, there exists $\epsilon'\in (0,1)$ depending only on $d,\epsilon$ such that $(Z,C+R)$ is generalized $\epsilon'$-lc. Because the moduli part is $\mathbf{b}$-nef and abundant, see \cite{Amb05}, there exists a $\mathbb{Q}$-divisor $C'$ on $Z$ such that $C'\sim_{\mathbb{Q}} C+R$ and $(Z,C')$ is $\frac{\epsilon'}{2}$-lc. 
				Note we assume the result in dimension $d-1$. Because $(Z,C')$ is $\frac{\epsilon'}{2}$-lc and  $A_Z$ is ample and integral and $\mathrm{vol}(A_Z)\leq v'$, then $(Z,A_Z)$ is log bounded in codimension one. We assume there exists $r\in \mathbb{N}$ depending only on $d,\epsilon,v$, a projective variety $Z'$ and a very ample divisor $H_{Z'}$ on $Z'$ such that
				\begin{itemize}
					\item $Z'$ is isomorphic in codimension one with $Z$, 
					\item $H_{Z'}-A_{Z'}$ is ample, where $A_{Z'}$ is the strict transform of $A_Z$, and
					\item $\vol(H_{Z'})\leq r$.
				\end{itemize}
				
				Because $Z\dashrightarrow Z'$ is an isomorphic in codimension one, $(W,B_W)$ is $\epsilon$-lc, and $K_W+B_W\sim_{\mathbb{Q}} 0$, by \cite[Proposition 3.7]{BDCS20}, there exists a projective pair $(W',B_{W'})$ and a contraction $W'\rightarrow Z'$ such that $(W',B_{W'})\dashrightarrow(W,B_W)$ is an isomorphism in codimension one.
				
				Since $(X,B)$ is klt Calabi--Yau and $X\dashrightarrow W$ is a birational contraction, $(X,B)$ is crepant birationally equivalent to $(W,B_W)$, and hence to $(W',B_{W'})$. Let $X'\rightarrow W'$ be a birational morphism extracting precisely the exceptional divisor of $X\dashrightarrow W$, and let $B'$ be the strict transform of $B$ on $X'$. Then $(X,B)\dashrightarrow (X',B')$ is an isomorphism in codimension one. Thus we have the following diagram.
				$$\xymatrix{
					X \ar@{-->}[r]^g \ar@{-->}[d]  & W\ar[r]^h \ar@{-->}[d] & Z\ar@{-->}[d]\\
					X'\ar[r]_{g'} & W'\ar[r]_{h'} & Z'. 
				}$$

				Because $(W',B_{W'})$ is $\epsilon$-lc, $K_{W'}+B_{W'}\sim_{\mathbb{Q}} 0$, and $B_{W'}$ is big over $Z'$, then $h':(W',B_{W'})\rightarrow Z'$ is a $(d,r,\epsilon)$-Fano type contraction. Thus by \cite[Theorem 1.2]{Bir22}, $W'$ is bounded. We may assume there exists $r'$ depending only on $d,r,\epsilon$ and a very ample divisor $H_{W'}$ on $W'$ such that 
				\begin{itemize}
					\item $H_{W'}-h'^*H_{Z'}$ is ample, 
					\item $H_{W'}+K_{W'}$ is pseudo-effective, and
					\item $\vol(H_{W'})\leq r'$.
				\end{itemize}
				
				Because $H_{Z'}-A_{Z'}$ and $H_{W'}-h'^*H_{Z'}$ is ample, then $H_{W'}-h'^*A_{Z'}$ is ample. Since $p(K_W+tA_W)\sim h^*A_Z$, then $p(K_{W'}+tA_{W'})\sim h'^*A_{Z'}$, where $A_{W'}$ is the strict transform of $A_W$ on $W'$. Since $W'$ is bounded and $l,t$ are in a finite set depending only on $d,\epsilon$, after replacing $H_{W'}$ by a fixed multiple, we may assume $H_{W'}-A_{W'}$ is pseudo-effective. Thus $(W',A_{W'})$ is log bounded. 
				
				Because both $H_{W'}-A_{W'}$ and $H_{W'}+K_{W'}$ are pseudo-effective, by \cite[Theorem 1.8]{Bir21}, there exists $t'$ depending only on the log bounded set, such that $(W',B'+2t'A_{W'})$ is lc, then $(W',B'+t'A_{W'})$ is $\frac{\epsilon}{2}$-lc. 
				
				Since $K_{X'}+B'\sim_{\mathbb{Q}} g'^*(K_{W'}+B_{W'})$ and $(W',B'+t'A_{W'})$ is $\frac{\epsilon}{2}$-lc, then $(X',B'+t'g'^*A_{W'})$ is $\frac{\epsilon}{2}$-lc. Because $t'$ is fixed, after replacing $H_{W'}$ by a fixed multiple, we may assume $H_{W'}-t'A_{W'}$ is ample. Thus, $(X',B'+t'g'^*A_{W'})\rightarrow W'$ is a $(d,r',\frac{\epsilon}{2})$-Fano fibration. By \cite[Theorem 1.2]{Bir22}, $(X',h'^*A_{X'})$ is log bounded. Let $A'$ be the strict transform of $A$ on $X'$. Since $A$ is nef, by negativity lemma, $A'\leq h'^*A_{X'}$. Therefore, $(X',A')$ is log bounded and $(X,A)$ is log bounded in codimension one.
			\end{proof} 
		\end{thm}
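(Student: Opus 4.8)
The plan is to induct on the dimension $d$, assuming the statement in dimension $d-1$. A $\mathbb{Q}$-factorialization is an isomorphism in codimension one, so I would first reduce to $X$ being $\mathbb{Q}$-factorial; the variety $W$ produced below then inherits $\epsilon$-lc singularities. If $B=0$ then $X$ is $\epsilon$-lc Fano and $(X,A)$ is log bounded by \cite[Theorem 1.5]{Bir23}, so I may assume $B\neq 0$, whence $K_X\sim_{\mathbb{Q}}-B$ is not pseudo-effective. I would then run a $K_X$-MMP with scaling of $A$, ending in a birational contraction $g:X\dashrightarrow W$ followed by a Mori fiber space $h:W\rightarrow Z$. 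Writing $t$ for the pseudo-effective threshold of $A$, the fiber space structure gives $K_W+tA_W\sim_{\mathbb{Q},Z}0$, and the canonical bundle formula yields a generalized pair $(Z,C+R)$ with $K_W+tA_W\sim_{\mathbb{Q}}h^*(K_Z+C+R)$ and $K_Z+C+R$ ample. The key quantitative input is Theorem \ref{boundedness of pseudo-effective threshold}, bounding $t$ below by a fixed $\delta>0$; combined with the finiteness of such thresholds (\cite[Lemma 4.11]{Bir23}), $t$ lies in a finite set, so a fixed multiple renders $A_Z:=p(K_Z+C+R)$ integral and ample.

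Next I would descend the polarization to the base. Using that $|mA|$ is birational for a fixed $m$ (\cite[Theorem 1.1]{Bir23}) and comparing $A$ with $A_Z$ by intersection on a common resolution of $X$ and $W$, the projection formula together with $\vol(A)\leq v$ bounds $\vol(A_Z)$ above by a fixed $v'$. After replacing the generalized pair by a genuine $\tfrac{\epsilon'}{2}$-lc pair $(Z,C')$---using that the moduli part is $\mathbf{b}$-nef and abundant \cite{Amb05} and the lower bound on generalized log discrepancies \cite[Theorem 1.1]{Bir23b}---the induction hypothesis applies to $(Z,A_Z)$ and produces a model $Z'$ isomorphic to $Z$ in codimension one carrying a very ample $H_{Z'}$ of bounded volume with $H_{Z'}-A_{Z'}$ ample.

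Finally I would climb back up the tower $X\dashrightarrow W\rightarrow Z$. Transporting the crepant structure across $Z\dashrightarrow Z'$ via \cite[Proposition 3.7]{BDCS20} gives a contraction $W'\rightarrow Z'$ with $(W',B_{W'})$ crepant birationally equivalent to $(W,B_W)$; as $B_{W'}$ is big over $Z'$ this is a Fano type contraction, so \cite[Theorem 1.2]{Bir22} bounds $W'$. An application of \cite[Theorem 1.8]{Bir21} then supplies a fixed $t'$ making $(W',B'+t'A_{W'})$ be $\tfrac{\epsilon}{2}$-lc, so that $X'\rightarrow W'$ is a bounded Fano fibration; a second use of \cite[Theorem 1.2]{Bir22} bounds $(X',h'^*A_{X'})$, and the negativity lemma yields $A'\leq h'^*A_{X'}$. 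Hence $(X',A')$ is log bounded, and since $X\dashrightarrow X'$ is an isomorphism in codimension one, $\mathscr{C}(d,v,\epsilon)$ is log bounded in codimension one.

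I expect the principal obstacle to lie in the passage around the fibration $h:W\rightarrow Z$. The canonical bundle formula only outputs a \emph{generalized} pair on $Z$, so one must control its generalized singularities precisely enough to replace them by honest $\tfrac{\epsilon'}{2}$-lc singularities feeding the induction, and then propagate the resulting boundedness back up through both the Mori fiber space and the birational contraction while keeping the several polarizing divisors compatible up to pseudo-effectivity. Uniformly bounding $t$ away from zero by Theorem \ref{boundedness of pseudo-effective threshold} is exactly what prevents $A_Z$ from degenerating and keeps this descent--ascent finite.
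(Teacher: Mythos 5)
Your proposal follows essentially the same route as the paper's own proof: the same induction on dimension, the same MMP-with-scaling decomposition $X\dashrightarrow W\rightarrow Z$, the same use of Theorem \ref{boundedness of pseudo-effective threshold} to bound the threshold $t$, the canonical bundle formula with \cite{Amb05} and \cite[Theorem 1.1]{Bir23b} to feed the induction on $Z$, and the same ascent via \cite[Proposition 3.7]{BDCS20}, \cite[Theorem 1.2]{Bir22}, \cite[Theorem 1.8]{Bir21}, and the negativity lemma. The argument is correct as outlined, matching the paper step for step (with only minor details compressed, such as the integrality of $p(K_Z+C+R)$ via \cite[Corollary 1.3]{Bir23b}).
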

		
		\begin{lemma}\label{e-lc threshold}
			Fix $\epsilon\in (0,1)$. Let $\mathscr{S}$ be a log bounded set of couples. Then there exists $\delta\in (0,1)$ depending only on $\epsilon$ and $\mathscr{S}$ satisfying the following
			
			If $(X,A)\in \mathscr{S}$ is a couple, $B$ is a $\mathbb{Q}$-divisor on $X$ such that
			$(X,B)$ is a $\epsilon$-lc Calabi--Yau pair, then $(X,B+\delta D)$ is $\frac{\epsilon}{2}$-lc for every $D\in |A|_{\mathbb{Q}}$.
			\begin{proof}
				Suppose every couple in $\mathscr{S}$ has dimension $d$.
				
				By the definition of log boundedness, there exists $r\in\mathbb{N}$ depending only on $\mathscr{S}$ such that for every $(X,A)\in \mathscr{S}$, there exists a very ample divisor $H$ on $X$ such that
				\begin{itemize}
					\item $\vol(H)\leq r$, and
					\item $H-A$ is pseudo-effective.
				\end{itemize}
				By boundedness, after replacing $H$ and $r$ by fixed multiples, we may also assume $H+K_X$ is pseudo-effective.
				
				If $B$ is a $\mathbb{Q}$-divisor on $X$ such that $(X,B)$ is a $\epsilon$-lc Calabi--Yau pair, then $B\sim_{\mathbb{Q}} -K_X$ and $H-B\sim_{\mathbb{Q}} H+K_X$ is pseudo-effective. By \cite[Theorem 1.8]{Bir21}, there exists $\delta'\in (0,1)$ depending only on $d,r,\epsilon$ such that
				$$(X,B+\delta' D)\text{ is lc for every }D\in |A|_{\mathbb{Q}}.$$
				Define $\delta:=\frac{\delta'}{2}$. Since $(X,B)$ is $\epsilon$-lc, then $(X,B+\delta D)$ is $\frac{\epsilon}{2}$-lc for every $D\in |A|_{\mathbb{Q}}$.
			\end{proof}
		\end{lemma}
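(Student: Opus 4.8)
The plan is to reduce the statement to a uniform lower bound for log canonical thresholds over the bounded family $\mathscr{S}$, supplied by Birkar's theorem on lc thresholds in bounded families. Write $d$ for the common dimension of the couples in $\mathscr{S}$. First I would unpack log boundedness: there is $r\in\mathbb{N}$ depending only on $\mathscr{S}$ so that each $(X,A)\in\mathscr{S}$ admits a very ample divisor $H$ with $\vol(H)\le r$ and $H-A$ pseudo-effective. A second use of boundedness lets me enlarge $H$ (and $r$) by a fixed multiple so that in addition $H+K_X$ is pseudo-effective; this is possible because on a bounded family a sufficiently large multiple of a very ample divisor dominates $-K_X$.

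The crux is that the Calabi--Yau hypothesis tames the otherwise uncontrolled boundary $B$. Since $(X,B)$ is Calabi--Yau we have $B\sim_{\mathbb{Q}}-K_X$, hence $H-B\sim_{\mathbb{Q}}H+K_X$ is pseudo-effective. Thus, although the individual coefficients of $B$ may be arbitrary, $B$ is dominated by the fixed very ample divisor $H$ in the pseudo-effective order---and this is exactly the input required to run a uniform threshold bound. I would then apply \cite[Theorem 1.8]{Bir21} to obtain $\delta'\in(0,1)$, depending only on $d$, $r$, and $\epsilon$, such that $(X,B+\delta'D)$ is lc for every $D\in|A|_{\mathbb{Q}}$.

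Finally, I would set $\delta:=\delta'/2$ and interpolate. The boundary $B+\delta D$ is the average $\frac{1}{2}B+\frac{1}{2}(B+\delta'D)$, and log discrepancy is affine in the boundary, so for any prime divisor $E$ over $X$ we have $a(E;X,B+\delta D)=\frac{1}{2}a(E;X,B)+\frac{1}{2}a(E;X,B+\delta'D)\ge\frac{1}{2}\epsilon+0=\frac{\epsilon}{2}$, using that $(X,B)$ is $\epsilon$-lc and $(X,B+\delta'D)$ is lc. Hence $(X,B+\delta D)$ is $\frac{\epsilon}{2}$-lc. Since $r$ and $d$ are determined by $\mathscr{S}$, the resulting $\delta$ depends only on $\mathscr{S}$ and $\epsilon$, as required.

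The main obstacle is structural rather than computational: because the coefficients of $B$ are not assumed to lie in any fixed finite or DCC set, one cannot place $(X,B)$ in a bounded family of pairs nor invoke ACC for log canonical thresholds directly. The device that circumvents this is the pseudo-effective domination $H-B$, which follows for free from the Calabi--Yau relation together with $H+K_X$ pseudo-effective, and which is precisely the hypothesis under which the uniform lct estimate applies. Once that estimate is in hand, the passage from lc to $\frac{\epsilon}{2}$-lc by halving $\delta'$ is routine.
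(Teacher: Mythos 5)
Your proposal is correct and follows essentially the same route as the paper: extract $r$ and $H$ from log boundedness, enlarge $H$ so that $H+K_X$ is pseudo-effective, use $B\sim_{\mathbb{Q}}-K_X$ to get $H-B$ pseudo-effective, invoke \cite[Theorem 1.8]{Bir21} for the uniform lc threshold $\delta'$, and halve it. Your explicit convexity computation $a(E;X,B+\delta D)=\frac{1}{2}a(E;X,B)+\frac{1}{2}a(E;X,B+\delta' D)\geq\frac{\epsilon}{2}$ just spells out the final step the paper leaves implicit.
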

		\begin{thm}\label{log boundedness of PCY}
			Fix $d,v\in \mathbb{N} $ and $ \epsilon\in (0,1)$. Then the set $\mathscr{C}(d,v,\epsilon)$ is log bounded.
			\begin{proof}
				By Theorem \ref{boundedness of polarized CY in codimension 1}, $\mathscr{C}(d,v,\epsilon)$ is log bounded in codimension one. Then there exists a log bounded set of projective couples $\mathscr{S}$ such that for every $(X,A)\in \mathscr{C}(d,v,\epsilon)$, there exists $(X',A')\in \mathscr{C}$ and an isomorphism in codimension one $(X,A)\dashrightarrow (X',A')$.
				
				Suppose $B$ is an effective $\mathbb{Q}$-divisor on $X$ such that $(X,B)$ is $\epsilon$-lc Calabi--Yau. Let $B'$ be the strict transform of $B$ on $X'$, then $(X',B')$ is also an $\epsilon$-lc Calabi--Yau pair. By Lemma \ref{e-lc threshold}, there exits $\delta>0$ depending only on $d,v,\epsilon$ such that $(X',B'+\delta D')$ is $\frac{\epsilon}{2}$-lc for every $D'\in |A'|_{\mathbb{Q}}$.
				
				Because $X'\dashrightarrow X$ is an isomorphism in codimension one, a $\mathbb{Q}$-divisor $D'$ is in $ |A'|_{\mathbb{Q}}$ if and only if its strict transform $D$ is in $|A|_{\mathbb{Q}}$. And by negativity lemma, because $A$ is ample, we have $(X,B+\delta D)$ is $\frac{\epsilon}{2}$-lc for every $D\in |A|_{\mathbb{Q}}$.
				
				Because $A$ is integral ample and $(X,B)$ is $\epsilon$-lc, by \cite[Theorem 1.1]{Bir23}, there exists $m\in\mathbb{N}$ depending only on $d,\epsilon$ such that $|mA|$ defines a birational map. In particular $\vol(mA)\geq 1$, then $\vol(A)\geq \frac{1}{m^d}$.
				By Lemma \ref{boundedness of Cartier index by volume and lct}, there exists $r\in \mathbb{N}$ depending only on $d,\delta,m$ such that the Cartier index of any integral divisor on $X$ is a factor of $r$. In particular, $rA$ is Cartier. Then by \cite[Theorem 1.1 and Lemma 1.2]{Kol93}, there exists $r'$ depending only on $d,r$ such that $r'A$ is very ample. Because $\vol(A)\leq v$, then $(X,A)$ is log bounded.
			\end{proof}
		\end{thm}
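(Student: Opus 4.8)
The plan is to bootstrap from the codimension-one log boundedness already established in Theorem \ref{boundedness of polarized CY in codimension 1} to full log boundedness, the key being to bound the Cartier index of $A$ uniformly. By Theorem \ref{boundedness of polarized CY in codimension 1} I would first fix a log bounded family $\mathscr{S}$ such that every $(X,A)\in\mathscr{C}(d,v,\epsilon)$ admits a small modification $(X,A)\dashrightarrow(X',A')$ with $(X',A')\in\mathscr{S}$. The goal then becomes: produce a fixed $r'\in\mathbb{N}$, depending only on $d,v,\epsilon$, such that $r'A$ is very ample on $X$; since $\vol(r'A)=r'^d\vol(A)\le r'^d v$ this immediately yields log boundedness, taking the very ample divisor to be $H=r'A$ and writing $A=\tfrac{1}{r'}(r'A)=cD$ with $c=1/r'\in(0,1)$ and $D=r'A$, so that $H-A=(r'-1)A$ is pseudo-effective.

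To produce such an $r'$ I would invoke Lemma \ref{boundedness of Cartier index by volume and lct} to bound the Cartier index of every integral divisor on $X$, followed by Kollár's effective very ampleness \cite{Kol93}. Applying Lemma \ref{boundedness of Cartier index by volume and lct} requires three inputs: that $(X,B)$ is klt (immediate, since $\epsilon$-lc implies klt); that $\vol(A)$ is bounded below by a fixed $\alpha>0$; and, most importantly, a uniform $t\in(0,1)$ with $(X,B+tD')$ klt for every $D'\in|A|_{\mathbb{Q}}$. The lower bound on $\vol(A)$ is routine: by \cite[Theorem 1.1]{Bir23} there is a fixed $m$ for which $|mA|$ is birational, whence $\vol(mA)\ge 1$ and $\vol(A)\ge m^{-d}$.

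The crux is the uniform log canonical threshold $t$, and here the codimension-one model does the work. On the bounded member $(X',A')\in\mathscr{S}$ I would apply Lemma \ref{e-lc threshold} to the strict transform $B'$ of $B$ (which is again $\epsilon$-lc Calabi--Yau since $X\dashrightarrow X'$ is small), obtaining a uniform $\delta>0$ with $(X',B'+\delta D')$ being $\frac{\epsilon}{2}$-lc for every $D'\in|A'|_{\mathbb{Q}}$. I would then transfer this back along the small birational map: a class $D'\in|A'|_{\mathbb{Q}}$ corresponds to its strict transform $D\in|A|_{\mathbb{Q}}$, and since $A$ is ample the negativity lemma upgrades the pullback comparison so that $(X,B+\delta D)$ is $\frac{\epsilon}{2}$-lc, hence klt, for every $D\in|A|_{\mathbb{Q}}$. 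Thus $t=\delta$ is the desired uniform threshold.

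I expect the main obstacle to be exactly this transfer of the threshold condition across the small modification: Lemma \ref{e-lc threshold} is naturally stated on the bounded family $\mathscr{S}$, whereas the Cartier-index bound must be applied on $X$ itself, and one must verify that ampleness of $A$ together with the negativity lemma genuinely preserves the $\frac{\epsilon}{2}$-lc property rather than merely controlling discrepancies in codimension one. Granting this, Lemma \ref{boundedness of Cartier index by volume and lct} with $(\alpha,t)=(m^{-d},\delta)$ yields a fixed $r$ with $rA$ Cartier, and \cite[Theorem 1.1 and Lemma 1.2]{Kol93} then supplies a fixed $r'$ with $r'A$ very ample, completing the argument.
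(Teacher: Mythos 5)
Your proposal matches the paper's own proof essentially step for step: pass to the codimension-one model from Theorem \ref{boundedness of polarized CY in codimension 1}, apply Lemma \ref{e-lc threshold} there and transfer the threshold back via the negativity lemma using ampleness of $A$, bound $\vol(A)$ below by Birkar's birationality theorem, then combine Lemma \ref{boundedness of Cartier index by volume and lct} with Koll\'ar's effective very ampleness to conclude. The argument is correct as written.
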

		\begin{proof}[{Proof of Theorem \ref{Main theorem: discreteness of volume}}]
			Fix $v>0$, we only need to show that if $(X,B)$ is a $d$-dimensional $\epsilon$-lc Calabi--Yau pair and $A$ is a big integral divisor on $X$ such that $\vol(A)\leq v$, then $\vol(v)$ is in a finite set depending only on $d,\epsilon,v$. After taking a $\mathbb{Q}$-factorization of $X$, we may assume $X$ is $\mathbb{Q}$-factorial and $A$ is $\mathbb{Q}$-Cartier.
			
			Choose $D\in|A|_{\mathbb{Q}}$ and $0<\delta\ll 1$ such that $(X,B+\delta D)$ is klt, let $\phi:X\dashrightarrow Y$ be the canonical model of $(X,B+\delta D)$. Because $(X,B)$ is Calabi--Yau, then $(Y,B_Y)$ is Calabi--Yau and
			$$\vol(A)=\vol(D)=\vol(D_Y)=\vol(A_Y),$$
			where $B_Y,D_Y$ and $A_Y$ are the pushforward of $B,D$ and $A$. We may replace $(X,B),A$ by $(Y,B_Y),A_Y$ and assume $A_Y$ is ample. Since $\vol(A_Y)\leq v$, the result follows from Theorem \ref{log boundedness of PCY}.
		\end{proof}
		
		\begin{proof}[{Proof of Theorem \ref{Main theorem: boundedness of polarized Calabi--Yau}}]
			This follows directly from Theorem \ref{log boundedness of PCY}.
		\end{proof}
		
		\noindent\textbf{Acknowledgments}. The author would like to thank his postdoc mentor Caucher Birkar for his encouragement and constant support. The author also acknowledges Jingjun Han, Guodu Chen, and Xiaowei Jiang for their valuable comments. This work was supported by BMSTC and ACZSP. 	
	

\begin{thebibliography}{PTW02}
		
		
		
		
		\bibitem[Amb05]{Amb05} F. Ambro, \textit{The moduli b-divisor of an lc-trivial fibration}, Compos. Math. \textbf{141} (2005), no. 2, 385--403.
		
		
		
		
		
		\bibitem[Bir19]{Bir19} C. Birkar, \textit{Anti-pluricanonical systems on Fano varieties}. Ann. of Math. (2), \textbf{190} (2019), 345--463.
		
		
		
		
		
		\bibitem[Bir21]{Bir21} C. Birkar, \textit{Singularities of linear systems and boundedness of Fano varieties}, Ann. of Math. \textbf{193} (2021), no. 2, 347--405.
		
		
		
		
		
		\bibitem[Bir23a]{Bir23} C. Birkar, \textit{Geometry of polarised varieties}, Pub. Math. IHES, \textbf{137} (2023), 47–105.
		
		\bibitem[Bir23b]{Bir23b} C. Birkar, \textit{Singularities on Fano fibrations and beyond}, arXiv: 2305.18770v1.
		
		\bibitem[Bir24]{Bir22} C. Birkar, \textit{Boundedness of Fano type fibrations
		}, Ann. Sci. ENS, \textbf{57} (2024), 787-840.
		
		
		\bibitem[BDCS20]{BDCS20} C. Birkar, G. Di Cerbo, and R. Svaldi, \textit{Boundedness of elliptic Calabi-Yau varieties with a rational section}, arXiv: 2010.09769v1.
		
		
		
		
		
		
		
		
		
		
		
		
		
		
		
		
		
		
		
		
		
		
		
		
		
		
		
		
		
		
		
		
		
		
		
		
		
		
		
		
		
		
		\bibitem[HMX13]{HMX13} C. D. Hacon, J. M\textsuperscript{c}Kernan, and C. Xu, \textit{On the birational automorphisms of varieties of general type}, Ann. of Math. (2) \textbf{177} (2013), no. 3, 1077--1111.
		
		
		
		
		
		
		
		
		
		
		
		
		
		
		
		\bibitem[HLQZ25]{HLQZ25} J. Han, J. Liu, L. Qi, and Z. Zhuang, \textit{Boundedness in general type MMP and fivefold effective termination}, arXiv:2506.20183 (2025).
		
		
		
		
		
		
		
		
		
		
		
		
		
		
		\bibitem[JJZ25]{JJZ25} X. Jiang, J. Jiao, and M. Zhu, \textit{Boundedness of Doubly Polarized log Calabi--Yau fibrations}, preprint.
		
		
		
		
		
		
		
		
		\bibitem[Kol93]{Kol93} J. Koll\'ar, \textit{Effective base point freeness}, Math. Ann. 296 (1993), 595–605.
		
		\bibitem[Kol96]{Kol96} J. Koll\'ar, \textit{Rational Curves on Algebraic Varieties}. Ergebnisse der Mathematik und ihrer Grenzgebiete. 3. Folge / A Series of Modern Surveys in Mathematics. Springer-Verlag Berlin Heidelberg 1996.
		
		
		
		
		
		
		
		
		\bibitem[KM98]{KM98} J. Koll\'{a}r and S. Mori, \textit{Birational geometry of algebraic varieties}, Cambridge Tracts in Math. \textbf{134} (1998), Cambridge Univ. Press.
		
		
		
		
		
		
		\bibitem[Laz04]{Laz04} R. Lazarsfeld, \textit{Positivity in algebraic geometry. II. Positivity for vector bundles and multiplier ideals.} Ergebnisse der Mathematik und ihrer Grenzgebiete. \textbf{3}. Folge.
		
		
		
		
		
		
		
		
		
		
		
		
		
		
		
		
		
		
		
		
		
		
		
		
		
		
		\bibitem[PS09]{PS09} Y. G. Prokhorov and V.V. Shokurov, \textit{Towards the second main theorem on complements}, J. Algebraic Geom. \textbf{18} (2009), no. 1, 151-199.
		
		
		
		
		\bibitem[Sho20]{Sho20} V.V. Shokurov, \textit{Existence and boundedness of $n$-complements}, arXiv: 2012.06495v1.
		
		
		
		
		
		\bibitem[XZ21]{XZ21} C. Xu and Z. Zhuang, \textit{Uniqueness of the minimizer of the normalized volume function}, Camb. J. Math. 9 (2021), no. 1, 149–176.
	\end{thebibliography}
\end{document}